\newtheorem{lemma}{Lemma}
\newtheorem{theorem}{Theorem}
\newtheorem{cor}{Corollary}
\newcommand{\Prob}{\mathbb{P}}
\newcommand{\K}{\mathcal{K}} 
\newcommand{\calK}{K}
\newcommand{\N}{\mathbb{N}}
\newcommand{\Z}{\mathbb{Z}}
\DeclareMathOperator{\tr}{tr}
\definecolor{blue}{rgb}{0,0,1}
\definecolor{red}{rgb}{1,0,0}
\title{\bf Circular unitary ensemble with highly oscillatory potential}
\author{Jinho Baik\thanks{Department of Mathematics, University of Michigan, Ann Arbor, MI, 48109, USA \newline
email: \texttt{baik@umich.edu}}
}
\date{\today}
\begin{document}
\maketitle

\begin{abstract}
We study the effect of highly oscillatory potentials to the eigenvalues of a random matrix. 
Consider the circular unitary ensembles with an external potential which is periodic with the period comparable to the average spacing of the eigenvalues. 
We show that in this case  the density of states is periodic and does not converge in the large matrix limit, but the local correlation functions converge to some simple combinations of the sine kernel and the potential.
We evaluate the correlation functions exactly and also asymptotically. 
\end{abstract}

\section{Introduction and results}

The external potential of a random matrix ensemble changes the density of states in the large matrix limit, but it does not influence the local  correlations of the eigenvalues generically. 
The universal limiting local correlation functions are given in terms of the sine kernel for the unitary invariant ensembles. 
Here an assumption was made that  the external potentials are of macroscopic scale. 

In this paper we study what happens if the potential changes in the microscopic scaling comparable to the spacing of the eigenvalues. 
Then it is natural to imagine that the potential influences the eigenvalues in the microscopic level and hence the universality is disturbed. 
We study this in the setting of the circular unitary ensembles of random unitary matrices in the presence of highly oscillatory potentials. We  evaluate the local correlation functions exactly and also asymptotically, and study the influence of the potential in the local correlation level. 

For a real-valued potential $V(e^{i\theta})$, $\theta\in [0, 2\pi]$, set
\begin{equation}
   V_\Lambda(e^{i\theta})= V(e^{i\Lambda\theta}), \qquad  \Lambda\in \N. 
\end{equation}
Consider the unitary group $\mathcal{U}_N$ of unitary matrices of size $N$ and let the density function on $\mathcal{U}_N$ be proportional to $e^{-\tr V_\Lambda(U)}dU$. Here $dU$ is the
Haar measure on $\mathcal{U}_N$. 
Then the eigenvalue density function is given by 
\begin{equation}\label{eq:density}
   	p_{N, \Lambda}(e^{i\theta_1}, \cdots, e^{i\theta_{N}})
   = \frac1{Z_{N}} \prod_{1\le a<b\le N} |e^{i\theta_a}-e^{i\theta_b}|^2
   \prod_{a=1}^{N} e^{-V_\Lambda(e^{i\theta_a})},
\end{equation}
where $Z_{N}$ is the normalization constant. 
Note that throughout this paper
\begin{itemize}
\item $N$ is the number of eigenvalues, and
\item $\frac{2\pi}{\Lambda}$ is the period of the potential. 
\end{itemize}
We also define the weights
\begin{equation}
   w(e^{i\theta})= e^{-V(e^{i\theta})}, \qquad w_\Lambda(e^{i\theta})=e^{-V_\Lambda(e^{i\theta})}
\end{equation}
for convenience. 

As an example, if $V(e^{i\theta})= -t\cos(\theta)$, $t>0$, and $\Lambda=N$, then $V_N(e^{i\theta})= -t\cos(N\theta)$ has $N$ local minima at
$\theta= \frac{2\pi k}{N}$, $k=0,1,\dots, N-1$. 
These $N$ local minima give an extra repulsion between the eigenvalues in addition to the 
log-repulsions due to the Vandermonde term in~\eqref{eq:density} in the microscopic scaling.

The standard circular unitary ensemble (CUE) corresponds to the case when $V(e^{i\theta})=1$. In this case, the density of states is uniform and the local correlation functions of the eigenvalues are given by the sine kernel. 
If we consider the eigenvalue density with macroscopically-varying potential $W(e^{i\theta})$ given as 
\begin{equation}\label{eq:density2}
   p_{N}(e^{i\theta_1}, \cdots, e^{i\theta_{N}})
   = \frac1{Z_{N}} \prod_{1\le a<b\le N} |e^{i\theta_a}-e^{i\theta_b}|^2
   \prod_{a=1}^{N} e^{-NW(e^{i\theta_a})},
\end{equation}
then the density of states depends on $W$, but local correlations given in terms of the sine kernel is still universal \cite{PasturShch1997, BleherIts, DeiftKMVZ99}. See also \cite{ABD} and references in it. 
Note the presence of the factor $N$ in front of the potential $W$. 
With this $N$, the density of states changes from the uniform distribution. If $N$ is not present, the density of states is still uniform.

Since the eigenvalue density function~\eqref{eq:density} is invariant under the shift $\theta_j\mapsto \theta_j+\frac{2\pi}{\Lambda}$, the density of states is periodic with period $\theta=\frac{2\pi}{\Lambda}$ and hence it does not converge. 
However, we will see below that it is still bounded as $N\to \infty$, $\Lambda=O(N)$. 
On the other hand, if we replace the term in the last product in~\eqref{eq:density} to $e^{-NV_\Lambda(e^{i\theta})}$, then the density of states is not bounded any more. See Section~\ref{sec:example}. 

\medskip

The density~\eqref{eq:density} was  previously studied by Forrester \cite{Forrester}. He considered the case when $\Lambda=N$ and evaluated the partition function and  the one-point and two-point correlation functions exactly and asymptotically. 
In this paper, we extend Forrester's result and evaluate all $m$-point correlation functions when $\Lambda/N$ is rational. 
(We note, however, that the case when $\Lambda/N$ is an integer can be obtained easily from the case when $\Lambda=N$ by renaming the potential.)
In \cite{Forrester}, the integrals were evaluated explicitly using the Fourier expansion of the weight. 
In this paper we use a different method and use a simple relation between orthogonal polynomials  with respect to $e^{-V_\Lambda(e^{i\theta})}$ and those with respect to $e^{-V(e^{i\theta})}$. 
Forrester's work was motivated by the Kosterlitz-Thouless conducting-insulating phase transition. 
A different motivation which inspires this paper is discussed in Section~\ref{sec:motivation} below. 

Forrester \cite{Forrester} also evaluated the free energy and  the one-point and two-point correlation functions for the cases when the term $|e^{i\theta_a}-e^{i\theta_b}|^2$ in~\eqref{eq:density} is replaced by 
$|e^{i\theta_a}-e^{i\theta_b}|^\beta$ for $\beta=1$ and $\beta=4$. These cases correspond to orthogonal and symplectic invariant ensembles. 
It is an interesting open question to evaluate the higher correlation functions for these cases.

\subsection{Orthogonal polynomials}

The correlation functions for unitary ensembles are expressed in terms of orthogonal polynomials with respect to the weight
\cite{Mehta, TW2}. 
We first state a simple observation about the orthogonal polynomials with respect to periodic weights on the unit circle. 

Let $\pi_\ell^{(\Lambda)}(z) = z^\ell+\cdots$ be the monic orthogonal polynomial of
degree $\ell$ on the unit circle 
with respect to the weight $w_\Lambda$.
It is determined by the conditions 
\begin{equation}\label{eq:OPcond}
  \int_{0}^{2\pi} e^{-im\theta} \pi_\ell^{(\Lambda)}(e^{i\theta}) w_\Lambda(e^{i\theta}) d\theta =0, \qquad
  m=0,1,\dots, \ell-1.
\end{equation}
In addition, set
\begin{equation}\label{eq:kap}
  \kappa_\ell^{(\Lambda)} = \bigg\{ \int_{0}^{2\pi} |\pi_\ell^{(\Lambda)}(e^{i\theta})|^2 w_\Lambda(e^{i\theta}) d\theta \bigg\}^{-1/2}.
\end{equation}
We assume that the orthogonal polynomials exist for all $\ell$ and $\Lambda$. 
This holds, for example, if we assume that $V$ is piecewise continuous. 

The main observation is that there are simple relations between the orthogonal polynomials with respect to $w_{\Lambda_1}$ and the orthogonal polynomials with respect to $w_{\Lambda_2}$ if $\Lambda_1$ is an integer multiple of $\Lambda_2$.

\begin{lemma}\label{lemma0}
For $n, \lambda, L\in \mathbb{N}$ and $k=0, 1, \cdots, L-1$, 
\begin{equation}
    \pi_{n L+k}^{(\lambda L)}(z)= z^k\pi_n^{(\lambda)}(z^L)
\end{equation}
and $\kappa_{n L+k}^{(\lambda L)}=\kappa_n^{(\lambda)}$.
\end{lemma}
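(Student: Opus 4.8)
The plan is to verify directly that the right-hand side $P(z) := z^k \pi_n^{(\lambda)}(z^L)$ is \emph{the} monic orthogonal polynomial of degree $nL+k$ for the weight $w_{\lambda L}$, and then to conclude by uniqueness of monic orthogonal polynomials on the circle. The starting point is the elementary but crucial identity $w_{\lambda L}(e^{i\theta}) = w_\lambda(e^{iL\theta})$, which follows at once from $V_\Lambda(e^{i\theta}) = V(e^{i\Lambda\theta})$. This is exactly what allows the dilation $\theta\mapsto L\theta$ to convert one orthogonality problem into the other.

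First I would check the degree and leading coefficient. Since $\pi_n^{(\lambda)}(z) = z^n + \cdots$, we have $\pi_n^{(\lambda)}(z^L) = z^{nL} + \cdots$, so $P(z) = z^{nL+k} + \cdots$ is monic of degree $nL+k$, as required. The heart of the argument is then the orthogonality condition~\eqref{eq:OPcond}: I must show $\int_0^{2\pi} e^{-im\theta} P(e^{i\theta}) w_{\lambda L}(e^{i\theta})\, d\theta = 0$ for $m = 0,1,\dots,nL+k-1$. Writing $P(e^{i\theta}) = e^{ik\theta}\pi_n^{(\lambda)}(e^{iL\theta})$ and using the weight identity, this integral becomes $\int_0^{2\pi} e^{-i(m-k)\theta}\, \pi_n^{(\lambda)}(e^{iL\theta})\, w_\lambda(e^{iL\theta})\, d\theta$. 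Apart from the factor $e^{-i(m-k)\theta}$, the integrand is $\frac{2\pi}{L}$-periodic in $\theta$, so the integral vanishes unless $m-k$ is a multiple $pL$ of $L$; and when $m-k = pL$, the substitution $\phi = L\theta$ together with the $2\pi$-periodicity of $\pi_n^{(\lambda)}(e^{i\phi})\, w_\lambda(e^{i\phi})$ reduces it exactly to the orthogonality integral $\int_0^{2\pi} e^{-ip\phi}\, \pi_n^{(\lambda)}(e^{i\phi})\, w_\lambda(e^{i\phi})\, d\phi$.

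The step that requires genuine care, and where the hypothesis $0 \le k \le L-1$ enters, is the bookkeeping on the range of $p$. As $m$ runs over $0,\dots,nL+k-1$, the quantity $m-k$ runs over $-k,\dots,nL-1$; the constraint $k \le L-1$ forces every multiple $pL$ lying in this range to satisfy $0 \le p \le n-1$, which is precisely the set of indices for which $\pi_n^{(\lambda)}$ is orthogonal. Hence the integral vanishes in every case, and $P$ meets all the conditions defining $\pi_{nL+k}^{(\lambda L)}$.

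Finally, the norm identity follows from the same dilation. Since $|e^{ik\theta}| = 1$, the substitution $\phi = L\theta$ turns $\int_0^{2\pi} |P(e^{i\theta})|^2\, w_{\lambda L}(e^{i\theta})\, d\theta$ into $\int_0^{2\pi} |\pi_n^{(\lambda)}(e^{i\phi})|^2\, w_\lambda(e^{i\phi})\, d\phi$, giving $\kappa_{nL+k}^{(\lambda L)} = \kappa_n^{(\lambda)}$ via~\eqref{eq:kap}. I expect the only real obstacle to be the range analysis described above; the rest is a direct computation built on the single weight identity.
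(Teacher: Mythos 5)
Your proof is correct and takes essentially the same route as the paper's: both check monicity and then reduce the orthogonality integral via the dilation $\theta\mapsto L\theta$, observing that the integral vanishes unless $m-k\in L\mathbb{Z}$ (you via $\tfrac{2\pi}{L}$-periodicity, the paper via the explicit geometric sum $1+r+\cdots+r^{L-1}=0$) and then invoking the orthogonality of $\pi_n^{(\lambda)}$ for the surviving indices $0\le p\le n-1$. The range bookkeeping you emphasize is precisely the paper's ``since $0\le k<L$'' step, so there is nothing to add.
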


\begin{proof}
Clearly $z^k\pi_n^{(\lambda)}(z^L)$ is a monic polynomial of degree $nL+k$. We
will check that it satisfies the orthogonality conditions 
for $\pi_{nL+k}^{(\lambda L)}$. 
For $m\in \Z$, 
\begin{equation}\label{eq:intco}
  \int_{0}^{2\pi} e^{-im\theta} e^{ik\theta} \pi_{n}^{(\lambda)}(e^{iL\theta}) w_{\lambda L}(e^{i\theta}) d\theta
  = \frac1{L} \int_{0}^{2\pi L} e^{-i\frac{m-k}{L}\phi} \pi_{n}^{(\lambda)}(e^{i\phi}) w_{\lambda}(e^{i\phi}) d\phi,
\end{equation}
where we used the change of variables $L\theta=\phi$,
Writing the integral as the sum of $L$ integrals over $\phi\in [2\pi j, 2\pi(j+1))$, $j=0,1,\dots, L-1$, and then shifting the argument by $\phi \mapsto \phi+2\pi j$ in each integral, we find that~\eqref{eq:intco} equals
\begin{equation}\label{eq:intco2}
  	\frac1{L}(1+r+\cdots + r^{L-1}) \int_{0}^{2\pi} e^{-i\frac{m-k}{L}\phi} \pi_{n}^{(\lambda)}(e^{i\phi}) w_\lambda(e^{i\phi}) d\phi, 
	\qquad r:= e^{-2\pi i\frac{m-k}{L}}.
\end{equation}
Now for $m-k\notin L\Z$, we have $r\neq 1$ but $r^L=1$, and hence the sum $1+r+\cdots + r^{L-1}=0$. 
On the other hand, for $m-k\in L\Z$, if $q=\frac{m-k}{L}\in \Z$, 
 the integral $\int_{0}^{2\pi} e^{-iq\phi} \pi_{n}^{(\lambda)}(e^{i\phi}) w_{\lambda}(e^{i\phi}) d\phi=0$ for $q=0,1, \cdots, n-1$ 
due to the orthogonality conditions of $\pi_{n}^{(\lambda)}$. 
Therefore, since $0\le k<L$, we find that~\eqref{eq:intco2} equals $0$ for all $m=0,1,\cdots, nL+k-1$. 
This proves that $z^k\pi_n^{(\lambda)}(z^L)=\pi_{nL+k}^{(\lambda L)}(z)$. 
From this it follow that $\kappa_{n L+k}^{(\lambda L)}=\kappa_n^{(\lambda)}$.
\end{proof}

We also note a simple relation
\begin{equation}\label{eq:piLkz}
    e^{\frac{-2\pi i\ell k}\Lambda} \pi_{\ell}^{(\Lambda)}(ze^{\frac{2\pi ik}\Lambda})= \pi_{\ell}^{(\Lambda)}(z), 
\end{equation}
for all integers $k$, which is simple to check from the conditions~\eqref{eq:OPcond}.

\subsection{Correlation functions}

From the Gaudin-Mehta method for circular unitary ensembles \cite{Mehta, TW2}, the correlation functions are expressible in terms of the reproducing kernel 
\begin{equation}\label{eq:KCD}
  	\K_{N,\Lambda}(e^{i\theta}, e^{i\varphi})
	= \sum_{\ell=0}^{N-1} (\kappa_{\ell}^{(\Lambda)})^2 \overline{ \pi_{\ell}^{(\Lambda)}(e^{i\theta})} \pi_{\ell}^{(\Lambda)}(e^{i\varphi}) 
	\sqrt{w_\Lambda(e^{i\theta})w_\Lambda(e^{i\varphi})}.
\end{equation}
The Christoffel-Darboux formula (see Theorem 11.4.2 of \cite{Szego}) simplifies this sum. 
The conjugated kernel 
$\calK_{N,\Lambda}(e^{i\theta}, e^{i\varphi})= e^{i\frac{(N-1)}2\theta} \K_{N,\Lambda}(e^{i\theta}, e^{i\varphi}) e^{-i \frac{(N-1)}2\varphi}$ then becomes 
\begin{equation}\label{eq:calK}
\begin{split}
   \calK_{N,\Lambda}(e^{i\theta}, e^{i\varphi})
   &=   \frac{ e^{-i\frac{N}2(\theta-\varphi)} \Psi_{N}^{(\Lambda)}(e^{i\theta})\overline{\Psi_{N}^{(\Lambda)}(e^{i\varphi})}
   - e^{i\frac{N}2(\theta-\varphi)}
     \overline{\Psi_{N}^{(\Lambda)}(e^{i\theta})} \Psi_{N}^{(\Lambda)}(e^{i\varphi})}{2i \sin(\frac12(\theta-\varphi))}.
\end{split}
\end{equation}
where 
\begin{equation}
\begin{split}
   \Psi_{N}^{(\Lambda)}(e^{i\theta}):= \kappa_N^{(\Lambda)} \pi_{N}^{(\Lambda)}(e^{i\theta}) \sqrt{w_\Lambda(e^{i\theta})}
   .
\end{split}
\end{equation}
We remark that~\eqref{eq:piLkz} implies the natural  symmetry  
\begin{equation}\label{eq:calK001}
\begin{split}
   \calK_{N,\Lambda}(e^{i(\theta+ \frac{2\pi k}{\Lambda})}, e^{i(\varphi+ \frac{2\pi k}{\Lambda})})
	=\calK_{N,\Lambda}(e^{i\theta}, e^{i\varphi})
\end{split}
\end{equation}
for all integers $k$. 

Lemma~\ref{lemma0} implies that $\Psi_{nL}^{(\lambda L)}(e^{i\frac{x}L})= \Psi_{n}^{(\lambda)}(e^{ix})$. 
From this we easily find the following. 

\begin{theorem}\label{thm1}
For positive integers $n, \lambda, L$, 
\begin{equation}\label{eq:thm1}
\begin{split}
  \calK_{nL, \lambda L}(e^{i\frac{\xi}{L}}, e^{i\frac{\eta}{L} })
  &= \frac{\sin(\frac12(\xi-\eta))}{\sin(\frac{1}{2L}(\xi-\eta))} \calK_{n, \lambda}(e^{i  \xi}, e^{i  \eta}).
\end{split}
\end{equation}
\end{theorem}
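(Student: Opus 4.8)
The plan is to substitute directly into the Christoffel--Darboux representation \eqref{eq:calK}, specializing $N=nL$, $\Lambda=\lambda L$, and evaluating at the rescaled points $\theta=\xi/L$, $\varphi=\eta/L$. The entire argument rests on the identity $\Psi_{nL}^{(\lambda L)}(e^{ix/L})=\Psi_{n}^{(\lambda)}(e^{ix})$ recorded just above the statement, which collapses every degree-$nL$ quantity at argument $e^{ix/L}$ into the corresponding degree-$n$ quantity at argument $e^{ix}$. I would first recall why that identity holds: Lemma~\ref{lemma0} with $k=0$ gives $\pi_{nL}^{(\lambda L)}(z)=\pi_{n}^{(\lambda)}(z^L)$ and $\kappa_{nL}^{(\lambda L)}=\kappa_{n}^{(\lambda)}$, while the periodic weight satisfies $w_{\lambda L}(e^{ix/L})=w(e^{i\lambda x})=w_{\lambda}(e^{ix})$; multiplying these three factors reproduces $\Psi_{n}^{(\lambda)}(e^{ix})$.

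With that in hand the computation is essentially bookkeeping. First I would simplify the phase factors: since $\frac{N}{2}(\theta-\varphi)=\frac{nL}{2}\cdot\frac{\xi-\eta}{L}=\frac{n}{2}(\xi-\eta)$, the exponentials $e^{\mp i\frac{N}{2}(\theta-\varphi)}$ in the numerator of \eqref{eq:calK} become exactly $e^{\mp i\frac{n}{2}(\xi-\eta)}$. Next I would replace each $\Psi_{nL}^{(\lambda L)}(e^{i\xi/L})$ by $\Psi_{n}^{(\lambda)}(e^{i\xi})$ and each $\overline{\Psi_{nL}^{(\lambda L)}(e^{i\eta/L})}$ by $\overline{\Psi_{n}^{(\lambda)}(e^{i\eta})}$ (and likewise with $\xi,\eta$ interchanged). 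The upshot is that the numerator of $\calK_{nL,\lambda L}(e^{i\xi/L},e^{i\eta/L})$ is term-for-term identical to the numerator of $\calK_{n,\lambda}(e^{i\xi},e^{i\eta})$.

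The only place the two kernels differ is the Christoffel--Darboux denominator: $\calK_{nL,\lambda L}$ carries $2i\sin\bigl(\tfrac12(\tfrac{\xi}{L}-\tfrac{\eta}{L})\bigr)=2i\sin\bigl(\tfrac{1}{2L}(\xi-\eta)\bigr)$, whereas $\calK_{n,\lambda}$ carries $2i\sin\bigl(\tfrac12(\xi-\eta)\bigr)$. Taking the ratio of the shared numerator over these two denominators yields the factor $\sin(\tfrac12(\xi-\eta))/\sin(\tfrac{1}{2L}(\xi-\eta))$ multiplying $\calK_{n,\lambda}(e^{i\xi},e^{i\eta})$, which is exactly \eqref{eq:thm1}. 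There is no genuine obstacle here; the one point demanding care is tracking the three ingredients of the $\Psi$ identity (the polynomial rescaling $z\mapsto z^L$, the invariance of $\kappa$, and the matching of the weights under $\theta\mapsto\lambda L\theta$ versus $x\mapsto\lambda x$) so that the numerators coincide cleanly and the sole surviving discrepancy is the sine in the denominator.
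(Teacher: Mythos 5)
Your proposal is correct and follows exactly the route the paper takes: the paper simply records the identity $\Psi_{nL}^{(\lambda L)}(e^{ix/L})=\Psi_n^{(\lambda)}(e^{ix})$ (a consequence of Lemma~\ref{lemma0} with $k=0$ together with $w_{\lambda L}(e^{ix/L})=w_\lambda(e^{ix})$) and substitutes it into the Christoffel--Darboux form \eqref{eq:calK}, leaving only the mismatch of the sine in the denominator. Your bookkeeping of the phase factors and the three ingredients of the $\Psi$ identity is accurate, so the argument is complete.
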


When $n=1$, the right-hand side is 
particularly simple. 

\begin{cor}\label{cor:simn1}
We have 
\begin{equation}\label{eq:corsim}
\begin{split}
   \calK_{L, \lambda L}(e^{i\frac{\xi}{L}}, e^{i\frac{\eta}{L}})
   &=   \frac{\sin(\frac12(\xi-\eta))}{  \sin(\frac{1}{2L}(\xi-\eta))}  \frac{\sqrt{w(e^{ i\lambda \xi})w(e^{ i\lambda \eta})}}{w_0}, 
  \qquad w_0:=  \int_0^{2\pi} w(e^{i\theta}) d\theta.
\end{split}
\end{equation}
\end{cor}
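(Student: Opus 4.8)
The plan is to read off the result as the $n=1$ specialization of Theorem~\ref{thm1}. Setting $n=1$ there gives
\[
\calK_{L,\lambda L}\bigl(e^{i\xi/L}, e^{i\eta/L}\bigr) = \frac{\sin(\tfrac12(\xi-\eta))}{\sin(\tfrac{1}{2L}(\xi-\eta))}\,\calK_{1,\lambda}\bigl(e^{i\xi}, e^{i\eta}\bigr),
\]
so the entire content of the corollary reduces to evaluating the single-eigenvalue conjugated kernel $\calK_{1,\lambda}$ and showing it equals $w_0^{-1}\sqrt{w(e^{i\lambda\xi})w(e^{i\lambda\eta})}$.

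To compute $\calK_{1,\lambda}$ I would go directly to the defining sum~\eqref{eq:KCD} rather than the Christoffel-Darboux form~\eqref{eq:calK}, since for $N=1$ the sum collapses to its single $\ell=0$ term. The monic polynomial of degree $0$ is $\pi_0^{(\lambda)}\equiv 1$, so that term is just $(\kappa_0^{(\lambda)})^2\sqrt{w_\lambda(e^{i\xi})w_\lambda(e^{i\eta})}$. Moreover, the phase prefactors $e^{i\frac{(N-1)}{2}\theta}$ and $e^{-i\frac{(N-1)}{2}\varphi}$ that convert $\K$ into $\calK$ both equal $1$ when $N=1$, so $\calK_{1,\lambda}=\K_{1,\lambda}$ and there is no extra oscillatory factor to track.

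The only genuine computation is the normalizing constant. From~\eqref{eq:kap} with $\ell=0$ we have $(\kappa_0^{(\lambda)})^{-2}=\int_0^{2\pi} w_\lambda(e^{i\theta})\,d\theta$, and the point is to identify this with $w_0$. Writing $w_\lambda(e^{i\theta})=w(e^{i\lambda\theta})$ and substituting $\phi=\lambda\theta$ turns the integral into $\frac1\lambda\int_0^{2\pi\lambda} w(e^{i\phi})\,d\phi$; using that $w(e^{i\phi})$ is $2\pi$-periodic, the integral over the $\lambda$-fold cover contributes a factor $\lambda$ that cancels the Jacobian, leaving exactly $w_0$. Hence $(\kappa_0^{(\lambda)})^2=1/w_0$, which yields $\calK_{1,\lambda}(e^{i\xi},e^{i\eta})=w_0^{-1}\sqrt{w(e^{i\lambda\xi})w(e^{i\lambda\eta})}$; combining with the displayed prefactor gives~\eqref{eq:corsim}. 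There is no serious obstacle here, as the statement is a direct specialization; the one step to carry out carefully is the periodicity argument in the change of variables for $\kappa_0^{(\lambda)}$, ensuring the factor $\lambda$ from the $\lambda$-fold cover precisely cancels the Jacobian.
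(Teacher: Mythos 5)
Your proof is correct and follows essentially the same route as the paper: specialize Theorem~\ref{thm1} to $n=1$, evaluate $\calK_{1,\lambda}=\K_{1,\lambda}$ from the single $\ell=0$ term of~\eqref{eq:KCD} using $\pi_0^{(\lambda)}\equiv 1$ and $(\kappa_0^{(\lambda)})^{-2}=\int_0^{2\pi}w(e^{i\lambda\theta})\,d\theta=w_0$. Your explicit periodicity argument for that last identity is a small elaboration the paper leaves implicit, but the substance is identical.
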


\begin{proof}
We have $\pi_0^{(\lambda)}(z)=1$ and $\kappa_0^{(\lambda)}= \big( \int_0^{2\pi} w(e^{i\lambda \theta})d\theta \big)^{-1/2}= w_0^{-1/2}$. 
From~\eqref{eq:KCD}, $\K_{1, \lambda}(e^{i\theta}, e^{i\varphi})= \frac{\sqrt{w(e^{i\lambda \theta})w(e^{i\lambda\theta})}}{w_0}$.
Since $\calK_{1, \lambda}= \K_{1,\lambda}$, we obtain the result. 
\end{proof}

\bigskip

The density of state $\rho_{N,\Lambda}$ is related to $K_{N, \Lambda}$ by the formula
\begin{equation}
	\rho_{N,\Lambda}(e^{i\theta})= \frac1{N} K_{N, \Lambda}(e^{i\theta}, e^{i\theta}).
\end{equation}
Note that since the eigenvalue density function is invariant under the shift $\theta_j\mapsto \theta_j+\frac{2\pi}{\Lambda}$, the density of states is periodic: 
\begin{equation}\label{eq:peridos}
	\rho_{N,\Lambda}(e^{i\theta})=\rho_{N,\Lambda}(e^{i(\theta+\frac{2\pi}{\Lambda})}).
\end{equation}
Theorem~\ref{thm1} implies the following. 
\begin{theorem}
We have 
\begin{equation}\label{eq:dos3}
\begin{split}
   \rho_{nL, \lambda L}(e^{i \frac{\xi}{L} })
  &= \frac{1}{n}  \calK_{n, \lambda}(e^{i\xi}, e^{i\xi}).
\end{split}
\end{equation} 
When $n=1$, this can be simplified to 
\begin{equation}\label{eq:dos2}
\begin{split}
  \rho_{L, \lambda L}(e^{i \frac{\xi}{L} })
  &= \frac{w(e^{i\lambda \xi})}{w_0}. 
\end{split}
\end{equation} 
\end{theorem}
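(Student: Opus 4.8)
The plan is to read off both identities from Theorem~\ref{thm1} by specializing to the diagonal. First I would record that on the diagonal the conjugated kernel and the original reproducing kernel coincide: since the conjugation factors in $\calK_{N,\Lambda}(e^{i\theta},e^{i\varphi}) = e^{i\frac{N-1}{2}\theta}\K_{N,\Lambda}(e^{i\theta},e^{i\varphi})e^{-i\frac{N-1}{2}\varphi}$ cancel when $\theta=\varphi$, we have $\calK_{N,\Lambda}(e^{i\theta},e^{i\theta})=\K_{N,\Lambda}(e^{i\theta},e^{i\theta})$, so that $\rho_{N,\Lambda}(e^{i\theta})=\frac1N\calK_{N,\Lambda}(e^{i\theta},e^{i\theta})$. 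This lets me feed the diagonal of $\calK$ directly into the scaling identity \eqref{eq:thm1}.

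Next I would apply Theorem~\ref{thm1} with $N=nL$ and $\Lambda=\lambda L$. The formula \eqref{eq:thm1} is written for $\xi\neq\eta$, and the diagonal $\eta=\xi$ turns the prefactor $\frac{\sin(\frac12(\xi-\eta))}{\sin(\frac1{2L}(\xi-\eta))}$ into a $0/0$ expression, so I would pass to the limit $\eta\to\xi$. Writing $s=\xi-\eta\to0$, this prefactor tends to $\frac{s/2}{s/(2L)}=L$, while $\calK_{n,\lambda}(e^{i\xi},e^{i\eta})$ is continuous on the diagonal, being the finite polynomial sum \eqref{eq:KCD} up to smooth phase factors. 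Hence $\calK_{nL,\lambda L}(e^{i\xi/L},e^{i\xi/L})=L\,\calK_{n,\lambda}(e^{i\xi},e^{i\xi})$, and dividing by $nL$ yields $\rho_{nL,\lambda L}(e^{i\xi/L})=\frac1n\calK_{n,\lambda}(e^{i\xi},e^{i\xi})$, which is \eqref{eq:dos3}.

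The only delicate point is justifying the $\eta\to\xi$ limit, and I would note that it can be bypassed entirely by working directly from the sum \eqref{eq:KCD}: on the diagonal the phases satisfy $|e^{ik\theta}|=1$, so Lemma~\ref{lemma0} lets me regroup the $nL$ terms indexed by $\ell=mL+k$ (with $0\le m<n$ and $0\le k<L$) into $L$ identical copies of the corresponding $\calK_{n,\lambda}$-sum after the substitution $\phi=L\theta$, producing the factor $L$ with no limit needed. For the case $n=1$, I would instead invoke the explicit evaluation from the proof of Corollary~\ref{cor:simn1}, namely $\calK_{1,\lambda}(e^{i\theta},e^{i\varphi})=\frac{\sqrt{w(e^{i\lambda\theta})w(e^{i\lambda\varphi})}}{w_0}$; setting $\varphi=\theta=\xi$ gives $\calK_{1,\lambda}(e^{i\xi},e^{i\xi})=\frac{w(e^{i\lambda\xi})}{w_0}$, and substituting into \eqref{eq:dos3} with $n=1$ produces \eqref{eq:dos2}.
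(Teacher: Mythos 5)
Your proposal is correct and follows the paper's intended route exactly: the paper gives no written proof, presenting the result as an immediate consequence of Theorem~\ref{thm1} on the diagonal (where the sine ratio contributes the factor $L$ that cancels against $N=nL$) together with Corollary~\ref{cor:simn1} for the $n=1$ case. Your careful handling of the $0/0$ limit $\eta\to\xi$, and the alternative direct regrouping of the sum \eqref{eq:KCD} via Lemma~\ref{lemma0}, are exactly the details the paper leaves implicit.
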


The formula~\eqref{eq:dos2} when $\lambda=1$ was first obtained in \cite{Forrester}. From this~\eqref{eq:dos2} for integer $\lambda$ also follows immediately by renaming the potential. 

Due to the periodicity, the density of states does not converge as $L\to\infty$. 
The above theorem shows that  it is nevertheless bounded. 

A comparison with the ensembles with macrocopically-varying potentials is in order. Note that in the eigenvalue density~\eqref{eq:density2}, there is the factor $N$ in front of the potential $W$. In this case, the density of states converges to the equilibrium measure $\rho_W^{eq}$ associated to $W$ as $N\to \infty$ for a very general class of potentials.
Suppose now we put the factor $N$ in front of a highly oscillatory potentials, namely 
\begin{equation}\label{eq:density3}
   \frac1{Z_{N}} \prod_{1\le a<b\le N} |e^{i\theta_a}-e^{i\theta_b}|^2
   \prod_{a=1}^{N} e^{-N V_\Lambda(e^{i\theta_a})}.
\end{equation}
Then~\eqref{eq:dos2} becomes $e^{-NV(e^{i\lambda \xi})}$ divided by $\int_0^{2\pi} e^{-NV(e^{i\lambda \xi})} d\xi$.
If $V(e^{i\theta})$ has one global minimum at $\theta_0$ in $[-\pi, \pi]$, then the Laplace's method shows that as $L\to \infty$ the density of states converges to $0$ for all $\xi \notin \frac{1}{\lambda}(\theta_0+2\pi \Z)$ and tends to $\infty$ at $\xi\in \frac{1}{\lambda}(\theta_0+2\pi \Z)$. 
Hence the density of states is not bounded in this case. 
See Section~\ref{sec:example} for more discussions. 

\bigskip

Let $R_{m}^{N,\Lambda}(e^{i\theta_1}, \cdots, e^{i\theta_{m}})$ denote the $m$-point correlation function of~\eqref{eq:density}. 
Note that the correlation functions are invariant under the shift $\theta_j\mapsto \theta_j+\frac{2\pi}{\Lambda}$ since so is the eigenvalue density function.  
The Mehta-Gaudin method implies that 
the $m$-point correlation function equals $\det(K_{N,\Lambda}(e^{i\theta_a}, e^{i\theta_a}))_{a,b=1}^m$. 
The above result for the reproducing kernel immediately yields the following 
\begin{theorem}\label{thm:corr}
Let $m$ be a positive integer. 
We have 
\begin{equation}\label{eq:Rlimit1}
\begin{split}
   \lim_{L\to \infty} 
   \bigg(\frac{1}{L}\bigg)^{m}  R_{m}^{nL, \lambda L}(e^{i\frac{\xi_1}{L} }, \cdots, e^{i\frac{\xi_m}{L} })
  &= \det\bigg[ \frac{\sin(\frac{1}{2}(\xi_a-\xi_b))}{\frac1{2}(\xi_a-\xi_b)} \calK_{n, \lambda}(e^{i\xi_a }, e^{i\xi_b }) \bigg]_{a,b=1}^{m}
\end{split}
\end{equation}
for $\xi$ and $\eta$ in a compact set. 
When $n=1$, this can be simplified to 
\begin{equation}\label{eq:Rlimit2}
\begin{split}
   \lim_{L\to \infty} 
   \bigg(\frac{1}{L}\bigg)^{m}  R_{m}^{L, \lambda L}(e^{i\frac{\xi_1}{L} }, \cdots, e^{i\frac{\xi_m}{L} })
  &= \frac{w(e^{i\lambda \xi_1})\cdots w(e^{i \lambda \xi_m})}{w_0^{m}} \det\bigg[ \frac{\sin(\frac12(\xi_a-\xi_b))}{\frac12(\xi_a-\xi_b)} \bigg]_{a,b=1}^{m}.
\end{split}
\end{equation}
\end{theorem}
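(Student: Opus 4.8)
The plan is to reduce the statement to the determinantal formula for the correlation functions and then apply Theorem~\ref{thm1} entrywise. First I would invoke the Mehta-Gaudin representation quoted just above, namely
\begin{equation}
R_m^{nL,\lambda L}(e^{i\theta_1},\cdots,e^{i\theta_m})=\det\big(\K_{nL,\lambda L}(e^{i\theta_a},e^{i\theta_b})\big)_{a,b=1}^m.
\end{equation}
The conjugation in~\eqref{eq:calK} replaces the matrix $(\K_{nL,\lambda L}(e^{i\theta_a},e^{i\theta_b}))_{a,b}$ by $D(\K_{nL,\lambda L}(e^{i\theta_a},e^{i\theta_b}))_{a,b}D^{-1}$ with $D=\mathrm{diag}(e^{i\frac{nL-1}{2}\theta_a})$, a diagonal similarity that leaves the determinant unchanged; hence I may use $\calK_{nL,\lambda L}$ in place of $\K_{nL,\lambda L}$.

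Next I would set $\theta_a=\xi_a/L$ and apply Theorem~\ref{thm1} to every entry, so that the off-diagonal entries become $\frac{\sin(\frac12(\xi_a-\xi_b))}{\sin(\frac{1}{2L}(\xi_a-\xi_b))}\calK_{n,\lambda}(e^{i\xi_a},e^{i\xi_b})$. For the diagonal entries the prefactor in~\eqref{eq:thm1} is formally $0/0$; here I would observe that the kernels on both sides of~\eqref{eq:thm1} are continuous in $\eta$ (the Christoffel-Darboux singularity being removable), so the identity extends to $\eta=\xi_a$ with the prefactor taking its limiting value $L$. Pulling one factor $1/L$ out of each of the $m$ entries, equivalently using $\det(cA)=c^m\det A$ with $c=1/L$, then yields
\begin{equation}
\begin{split}
&\Big(\tfrac1L\Big)^m R_m^{nL,\lambda L}(e^{i\xi_1/L},\cdots,e^{i\xi_m/L}) \\
&\qquad = \det\Big(\tfrac1L\,\frac{\sin(\frac12(\xi_a-\xi_b))}{\sin(\frac{1}{2L}(\xi_a-\xi_b))}\,\calK_{n,\lambda}(e^{i\xi_a},e^{i\xi_b})\Big)_{a,b=1}^m.
\end{split}
\end{equation}

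Now I would pass to the limit inside the fixed-size $m\times m$ determinant. Since $\sin(\frac{1}{2L}(\xi_a-\xi_b))=\frac{1}{2L}(\xi_a-\xi_b)+O(L^{-3})$, each scalar prefactor converges to $\frac{\sin(\frac12(\xi_a-\xi_b))}{\frac12(\xi_a-\xi_b)}$, read as $1$ when $a=b$, while $\calK_{n,\lambda}(e^{i\xi_a},e^{i\xi_b})$ does not depend on $L$. Because the determinant is a polynomial in its entries and the matrix has fixed size $m$, entrywise convergence immediately gives~\eqref{eq:Rlimit1}; local uniformity over the compact set of $\xi$'s follows from the locally uniform convergence of the prefactors together with the continuity of the fixed kernel $\calK_{n,\lambda}$. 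For the case $n=1$ I would finally substitute $\calK_{1,\lambda}(e^{i\xi_a},e^{i\xi_b})=\frac{\sqrt{w(e^{i\lambda\xi_a})w(e^{i\lambda\xi_b})}}{w_0}$ from Corollary~\ref{cor:simn1} and its proof; the weight splits into a row-dependent factor $\sqrt{w(e^{i\lambda\xi_a})}$ and a column-dependent factor $\sqrt{w(e^{i\lambda\xi_b})}$, so multilinearity extracts $\prod_{a}w(e^{i\lambda\xi_a})/w_0^m$ and reduces the determinant to the pure sine-kernel form~\eqref{eq:Rlimit2}.

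The argument is essentially forced once Theorem~\ref{thm1} is in hand, so I do not expect a genuine analytic obstacle; the only delicate points are the $0/0$ prefactor on the diagonal, handled by continuity in $\eta$, and keeping straight that exactly one power of $1/L$ per entry accounts for the overall $(1/L)^m$ scaling.
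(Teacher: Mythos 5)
Your argument is correct and is exactly the route the paper intends: it states that the result follows ``immediately'' from the Mehta--Gaudin determinantal formula together with Theorem~\ref{thm1}, and your write-up simply fills in the (genuinely routine) details --- the diagonal conjugation, the removable $0/0$ on the diagonal entries, the factor $1/L$ per entry, entrywise passage to the limit in a fixed-size determinant, and the rank-one factorization of $\calK_{1,\lambda}$ for the $n=1$ case. No gaps.
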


Hence the limiting local correlations functions are simple combinations of the sine kernel and the potential. 
See Figure~\ref{fig:twocor} and~\ref{fig:twocor2} for some plots. 

The result~\eqref{eq:Rlimit2} for $m=1$ or $m=2$ with $\lambda=1$ was first obtained in \cite{Forrester}. 

We note that the results of this section are obtained under the minimal condition on the potential $V$ that the orthogonal polynomials $\pi_{\ell}^{\Lambda}$ exist for all $\ell$ and $\Lambda$. This holds, for example, if $V$ is piecewise continuous. 

\begin{figure}[htbp]
  \centering
  \includegraphics[scale=0.3]{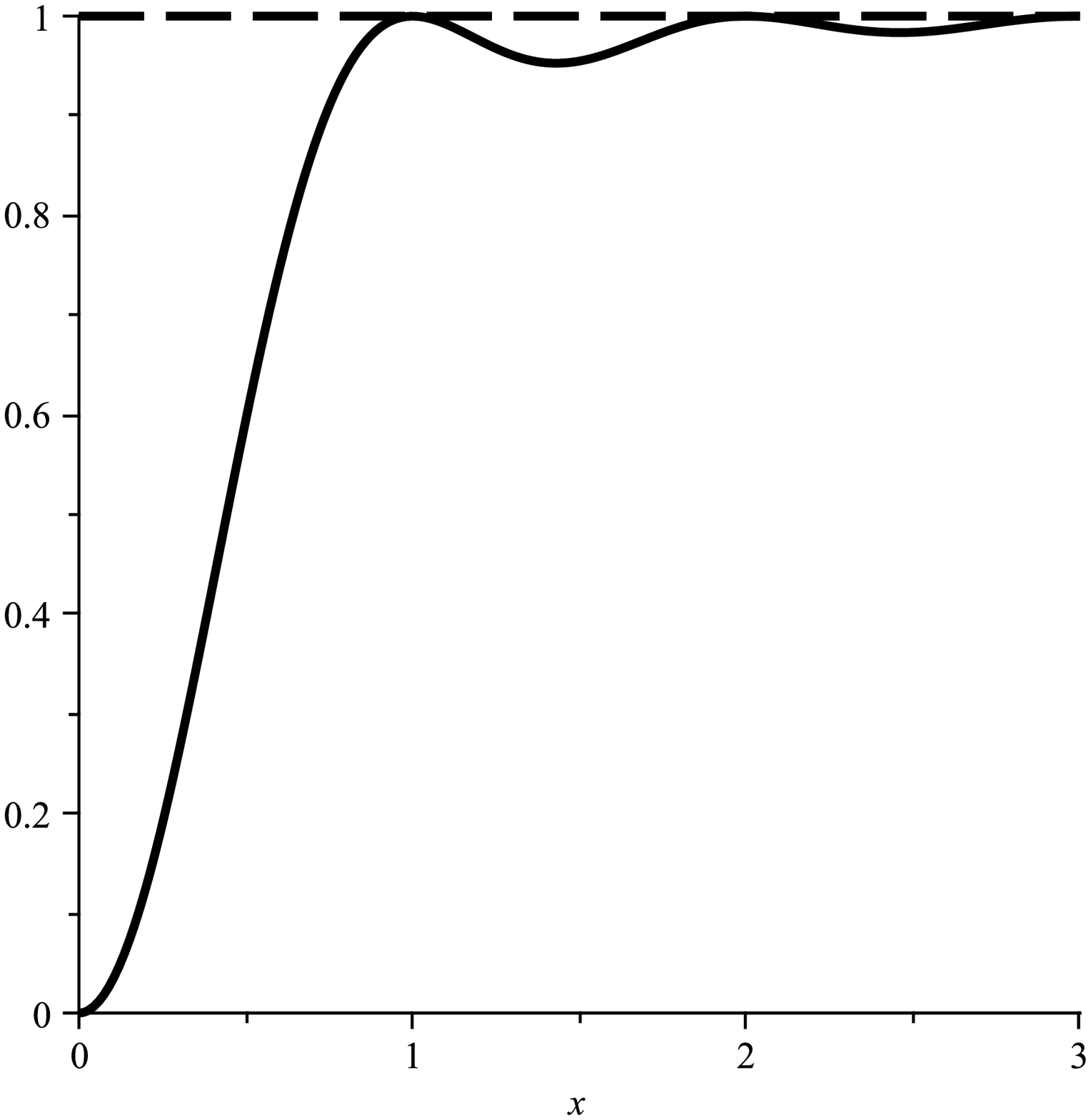}
  \includegraphics[scale=0.3]{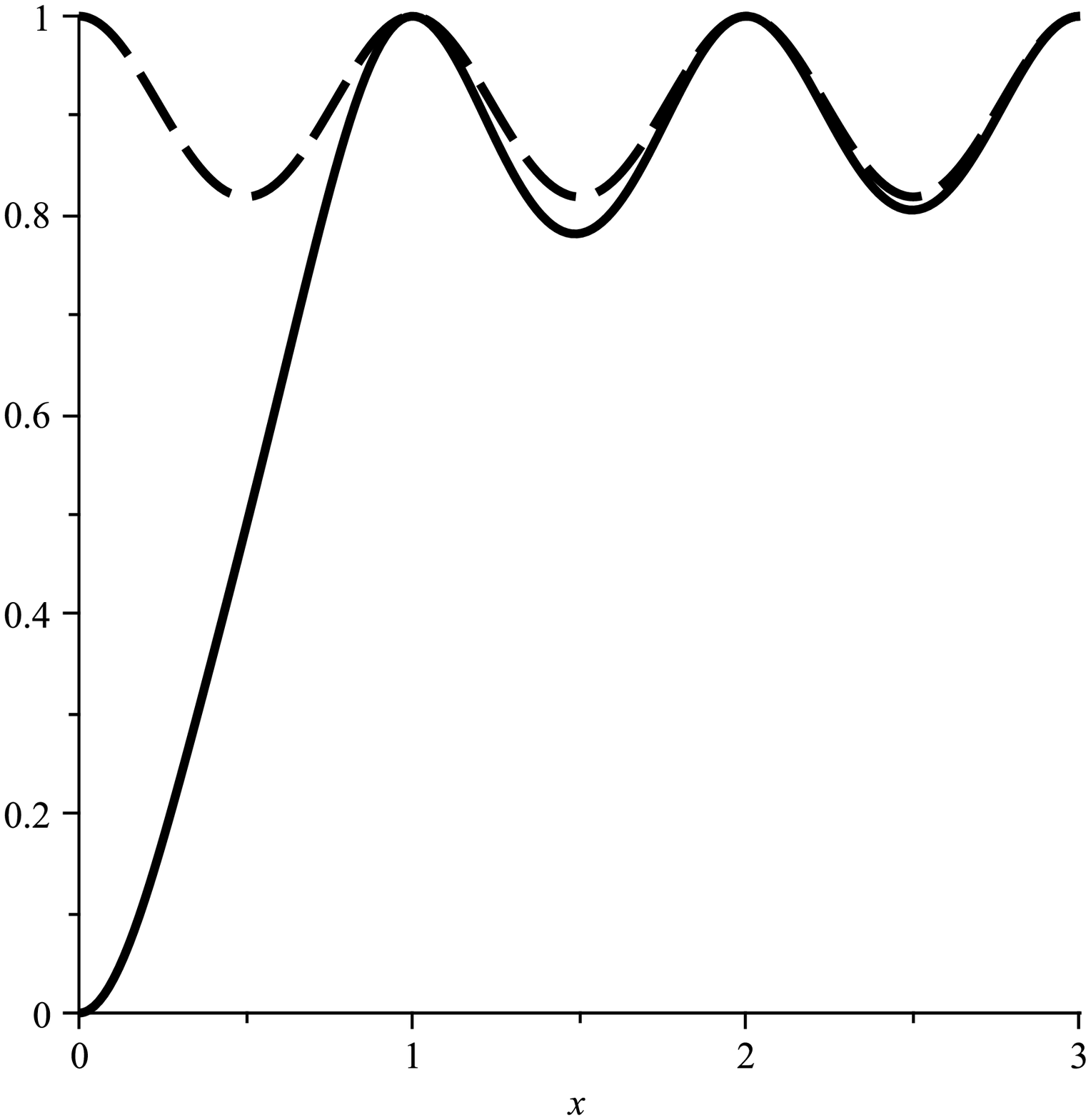}
  \caption{The dashed lines represent the density of states and the solid lines represent the limiting $2$-point function at $\xi_1=0$ and $\xi_2=2\pi x$. The heights are scaled so that the density of states is $1$ at $x=0$. 
Left: This is for the standard circular ensemble. The solid line is the graph of $R_2(0, x)=1- (\frac{\sin(\pi x)}{\pi x})^2$. 
Right: This is for the potential $V(e^{i\theta})=-\frac1{10} \cos(\theta)$ when $n=\lambda=1$. 
The solid  line is proportional to $\frac{w(1) w(e^{2\pi ix})}{w_0^2} (1- (\frac{\sin(\pi x)}{\pi x})^2)$ and the dashed line is proportional to $\frac{w(e^{2\pi ix})}{w_0}$ where $w(e^{2\pi ix})
= e^{\frac1{10} \cos(2\pi x)}$.}
  \label{fig:twocor}
\end{figure}

\begin{figure}[htbp]
  \centering
  \includegraphics[scale=0.3]{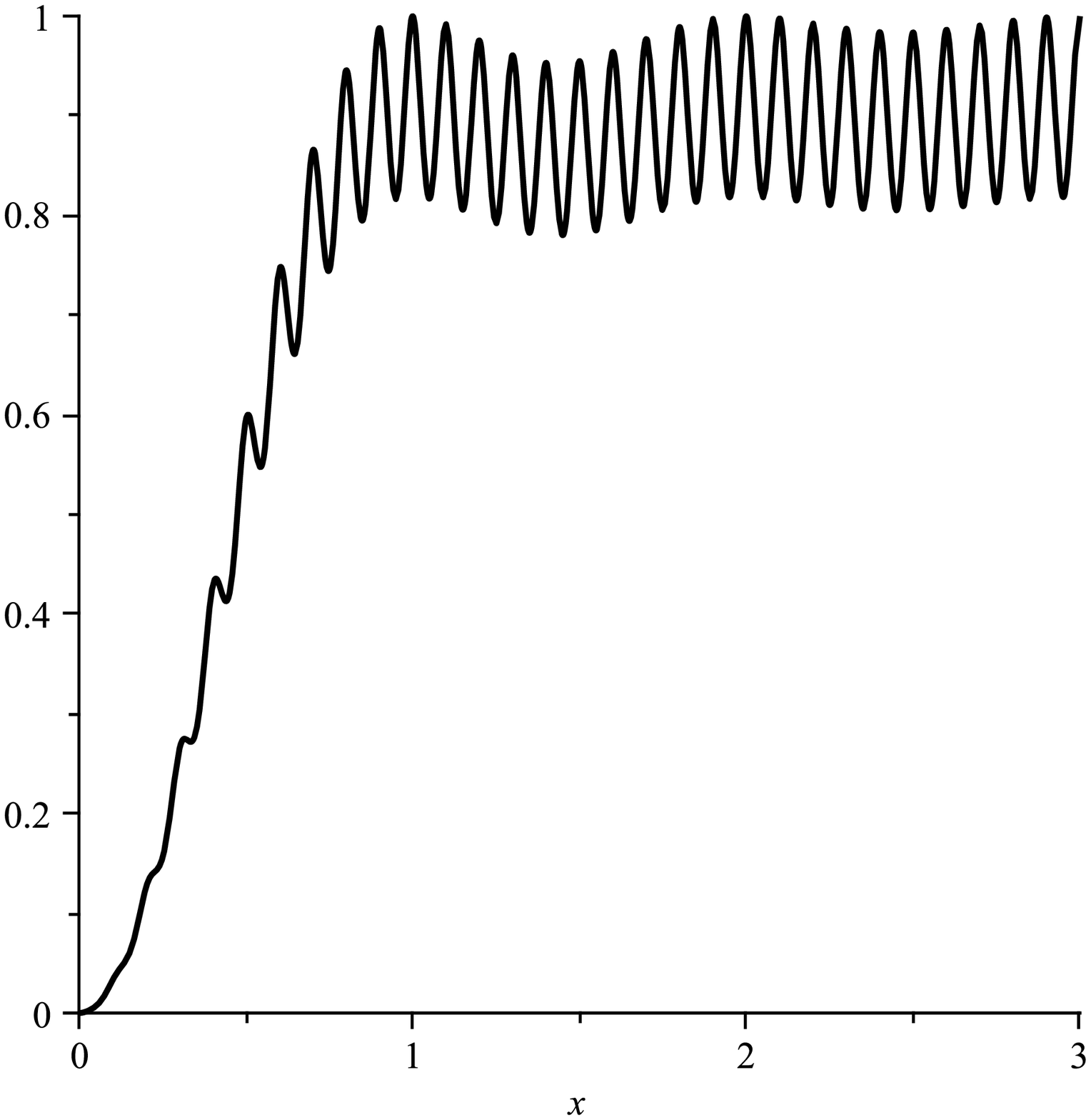}
  \includegraphics[scale=0.3]{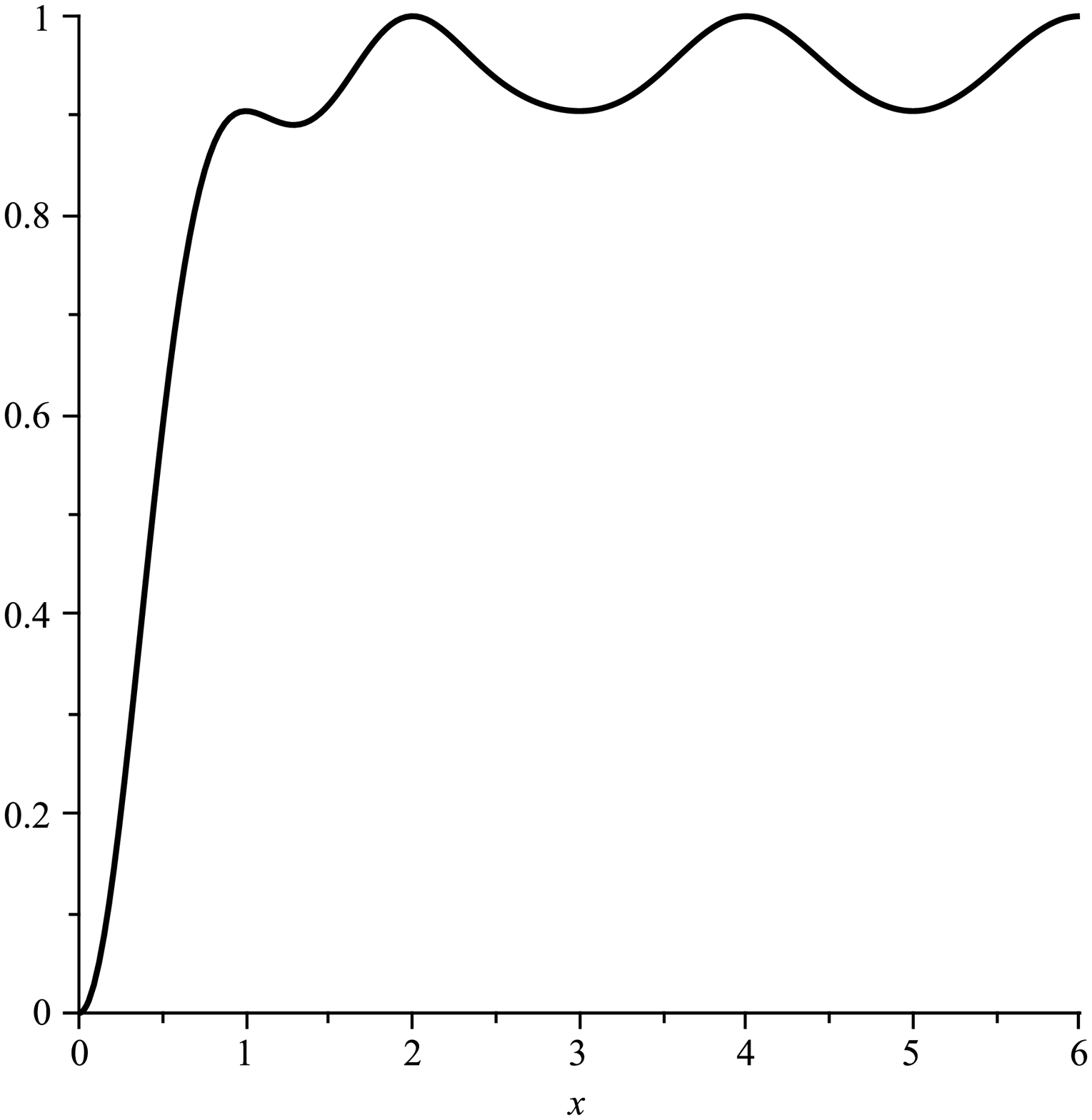}
  \caption{These are the graphs of the height-adjusted limiting $2$-point correlation functions with same potential $V(e^{i\theta})=-\frac1{10} \cos(\theta)$ as in Figure~\ref{fig:twocor}. Note that the $x$-axis on the right graph is scaled differently. Left: This is the case when $n=1$ and $\lambda=10$. There are $10$ times more ``wells'' of the oscillatory potential than the number of eigenvalues. 
Right: This is the case when $n=2$ and $\lambda=1$. There are twice more eigenvalues than the wells. 
}
  \label{fig:twocor2}
\end{figure}

\subsection{When $\Lambda=o(N)$:}

In Theorem~\ref{thm:corr}, we assumed that $\Lambda$ and $N$ are of same order.
We now consider what happens when $\Lambda=o(N)$. 
This corresponds to the case when $\lambda=1$ and $n\to \infty$ in the notations of the previous section.

Note that the right-hand side of~\eqref{eq:dos3} is the density of states for the ensemble with weight $e^{-V(e^{i\xi})}$ when $\lambda=1$. Hence if we replace $V$ by $nW$, this converges to the density function of the equilibrium measure $\rho_W^{eq}$ for $W$ as $n\to \infty$. 
This leads us to consider the following eigenvalue density function: 
\begin{equation}
\begin{split}
   \frac1{Z_{N}} \prod_{1\le a<b\le N} |e^{i\theta_a}-e^{i\theta_b}|^2
   \prod_{a=1}^{N} 	e^{-\frac{N}{\Lambda} W(e^{i\Lambda \theta_a})}.
\end{split}
\end{equation} 
Note the factor $\frac{N}{\Lambda}$ in front of the potential $W$. This factor is order $N$ when $\Lambda$ is order $1$, and is of order $1$ when $\Lambda$ is of same order as $N$. 
We assume that $W$ is a potential such that if we consider the  ensemble~\eqref{eq:density2}, then the density of states converges to the density function of the equilibrium measure $\rho_W^{eq}$ and the local statistics converge to the sine kernel in the bulk. 
We then obtain: 
\begin{theorem}
As $N\to \infty$ with $\Lambda = o(N)$ and $\frac{N}{\Lambda}\in \N$, the density of states satisfies 
\begin{equation}\label{eq:lth1}
\begin{split}
   	\rho_{N, \Lambda}(e^{i\frac{x}{\Lambda}}) \to \rho_W^{eq}(e^{ix}).
\end{split}
\end{equation} 
The $m$th correlation function satisfies 
\begin{equation}\label{eq:lth2}
\begin{split}
   	\bigg(\frac{1}{\rho(x) N}\bigg)^{m}  R_{m}^{N, \Lambda}(e^{i(\frac{x}{\Lambda}+\frac{\xi_1}{\rho(x) N}) }, \cdots, e^{i(\frac{x}{\Lambda}+\frac{\xi_m}{\rho(x) N})  })
  &\to  \det\bigg[ \frac{\sin(\pi(\xi_a-\xi_b))}{\pi(\xi_a-\xi_b)} \bigg]_{a,b=1}^{m}
\end{split}
\end{equation} 
for $x$ satisfying  $\rho(x):=\rho_{W}^{eq}(e^{ix})>0$. 
\end{theorem}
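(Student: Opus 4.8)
The plan is to reduce both claims to the universality hypothesis on the macroscopically-varying ensemble~\eqref{eq:density2} by a change of parameters. Setting $\lambda = 1$, $L = \Lambda$, and $n = N/\Lambda \in \N$, the weight $e^{-\frac{N}{\Lambda}W(e^{i\Lambda\theta})}$ is of the form $w_\Lambda$ with base potential $V(e^{i\theta}) = \frac{N}{\Lambda}W(e^{i\theta}) = nW(e^{i\theta})$. Consequently the associated $(n,\lambda)=(n,1)$ ensemble carries the weight $e^{-nW(e^{i\theta})}$ on $n$ eigenvalues, which is exactly the ensemble~\eqref{eq:density2} with $n$ in place of $N$ and potential $W$. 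By hypothesis its density of states converges to $\rho_W^{eq}$ and its rescaled kernel converges to the sine kernel in the bulk. Since $\Lambda = o(N)$ forces $n \to \infty$, this universality input is available, and everything below is an application of Theorem~\ref{thm1} and~\eqref{eq:dos3}.

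For the density of states I would apply~\eqref{eq:dos3} with $\lambda=1$, $L=\Lambda$, $n=N/\Lambda$ and $\xi=x$:
\begin{equation*}
   \rho_{N,\Lambda}(e^{i\frac{x}{\Lambda}}) = \frac1n \calK_{n,1}(e^{ix}, e^{ix}).
\end{equation*}
The right-hand side is $\frac1n$ times the reproducing kernel of the $n$-eigenvalue ensemble~\eqref{eq:density2} on the diagonal, hence precisely its density of states, and by the assumption on $W$ this converges to $\rho_W^{eq}(e^{ix})$. This gives~\eqref{eq:lth1}.

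For the correlation functions I would start from $R_m^{N,\Lambda} = \det(\calK_{N,\Lambda}(e^{i\theta_a}, e^{i\theta_b}))_{a,b=1}^m$, the conjugation being harmless inside the determinant since it amounts to conjugation by a diagonal matrix. Writing $\theta_a = \zeta_a/\Lambda$ with $\zeta_a = x + \frac{\xi_a}{\rho(x)n}$ and invoking Theorem~\ref{thm1}, each scaled entry factors, using $\frac{1}{\rho(x)N} = \frac1\Lambda\cdot\frac{1}{\rho(x)n}$, as
\begin{equation*}
   \frac{1}{\rho(x)N}\calK_{N,\Lambda}(e^{i\frac{\zeta_a}{\Lambda}}, e^{i\frac{\zeta_b}{\Lambda}}) = \underbrace{\frac1\Lambda \frac{\sin(\tfrac12(\zeta_a-\zeta_b))}{\sin(\tfrac{1}{2\Lambda}(\zeta_a-\zeta_b))}}_{B_{ab}^{(N)}} \cdot \underbrace{\frac{1}{\rho(x)n}\calK_{n,1}(e^{i\zeta_a}, e^{i\zeta_b})}_{C_{ab}^{(N)}}.
\end{equation*}
Since $\zeta_a-\zeta_b = \frac{\xi_a-\xi_b}{\rho(x)n} \to 0$ and also $(\zeta_a-\zeta_b)/\Lambda \to 0$, the second sine linearizes and $B_{ab}^{(N)} = \frac{\sin(\frac12(\zeta_a-\zeta_b))}{\frac12(\zeta_a-\zeta_b)}(1+o(1)) \to 1$; meanwhile $C_{ab}^{(N)} \to \frac{\sin(\pi(\xi_a-\xi_b))}{\pi(\xi_a-\xi_b)}$ by the sine-kernel universality for the $n$-eigenvalue ensemble~\eqref{eq:density2}. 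Each entry therefore converges to the sine kernel, and~\eqref{eq:lth2} follows from the continuity of the determinant as a polynomial in its entries.

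The main obstacle is that the Christoffel–Darboux factor $B_{ab}^{(N)}$ produced by Theorem~\ref{thm1} is a function of $\zeta_a-\zeta_b$ rather than a product $g(\zeta_a)h(\zeta_b)$, so it cannot be removed by a diagonal conjugation and pulled out of the determinant; one must instead argue entry-wise and rely on determinant continuity, which in turn requires that $B_{ab}^{(N)}\to 1$ and $C_{ab}^{(N)}\to$ (sine kernel) hold uniformly for $\xi_a,\xi_b$ in compact sets. The delicate bookkeeping is that two distinct microscopic scales are in play — the inner scale $\frac{1}{\rho(x)n}$ at which $\calK_{n,1}$ resolves the sine kernel, and the outer oscillation scale $\frac1\Lambda$ — and the identity $N = n\Lambda$ is exactly what makes the prefactor $\frac{1}{\rho(x)N}$ split so that $\frac1\Lambda$ pairs with the sine ratio (giving $1$) while $\frac{1}{\rho(x)n}$ pairs with $\calK_{n,1}$ (giving the sine kernel). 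Finally I would note that the residual phase in the conjugated kernel $\calK_{n,1}$ is separable in $\zeta_a,\zeta_b$ and hence cancels in the determinant, so it does not affect the limit.
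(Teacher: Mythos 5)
Your proposal is correct and follows essentially the same route as the paper: both apply Theorem~\ref{thm1} with $\lambda=1$, $L=\Lambda$, $n=N/\Lambda$, identify the resulting $\calK_{n,1}$ ensemble with weight $e^{-nW}$ as the macroscopic ensemble~\eqref{eq:density2}, split the prefactor $\frac{1}{\rho(x)N}=\frac1\Lambda\cdot\frac{1}{\rho(x)n}$ so that the Christoffel--Darboux sine ratio tends to $1$ while the rescaled $\calK_{n,1}$ tends to the sine kernel by the assumed universality, and obtain~\eqref{eq:lth1} directly from~\eqref{eq:dos3}. Your added remarks on entry-wise convergence and the harmlessness of the conjugation phase only make explicit what the paper leaves implicit.
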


\begin{proof}
The result~\eqref{eq:lth1} is already discussed. For~\eqref{eq:lth2}, note that~\eqref{eq:thm1} implies  
\begin{equation}
\begin{split}
   	\frac1{\rho(x) N} \calK_{N, \Lambda} (e^{i(\frac{x}{\Lambda}+\frac{\xi}{\rho(x) N})}, e^{i(\frac{x}{\Lambda}+\frac{\eta}{\rho(x) N})})
	= \frac{\sin(\frac{\Lambda}{2N}(\xi-\eta))}{\Lambda \sin(\frac1{2N}(\xi-\eta))} 
	\frac{\Lambda}{\rho(x)N} \calK_{\frac{N}{\Lambda}, 1} (e^{i(x+\frac{\Lambda}{\rho(x) N}\xi}), e^{i(x+\frac{\Lambda}{\rho(x) N}\eta)})
\end{split}
\end{equation} 
assuming $\rho(x)>0$. 
The fraction involving the sine functions tends to $1$ in the limit. On the other hand, the rest converges to the sine kernel as $\frac{N}{\Lambda} \to \infty$. 
Hence we obtain the result. 
\end{proof}

\subsection{A motivation}\label{sec:motivation}

In \cite{RawalRodgers}, the authors measured the bumper-to-bumper distances between cars parked parallel to the curb on London streets. The streets were without any driveways or side streets and the data were collected in the late evening so that there were few spaces capable of taking additional cars. 
The authors of \cite{RawalRodgers} found that their measurement did not agree with the standard random sequential adsorption also known as random car parking models \cite{Renyi} (see \cite{EvansJ, Cadilhe} for review).
In particular the data show that the density tends to zero as the spacing approaches zero whereas
the mean gap density for the random sequential adsorption diverges like a logarithmic function in the same limit \cite{MacKenzie,Gonzalez,Krapivisky}. 
Subsequently it was pointed out in \cite{AbulMagd} that the spacing distribution empirically obtained in \cite{RawalRodgers} is close to the Wigner surmise for Gaussian unitary ensemble, which is an approximation to the limiting spacing distribution of the eigenvalues given by the Gaudin distribution. 
A different model using Markov processes was also proposed in \cite{SebaCar} to explain the data of \cite{RawalRodgers} and also the data collected in the streets of Hradec Kr\'alov\'e, a city in the Czech Republic. 

The above data were collected on the streets free of parking meters. But many streets have parking meters. 
And if we consider parked cars in a parking garage, there are marking on the floor. These extra features give additional repulsions between the cars and we cannot expect to have the same distribution as the examples above. Indeed this was confirmed for the parked cars in parking garages in Ann Arbor, USA \cite{Fader}. 
If we were then to model this situation using random matrix theory, it is natural to imagine that the parking meters and the markings on the floor serve as external potentials which are periodic with the period in the same scale as the 
spacing of the eigenvalues. 
Then the eigenvalues are governed in part by the sine kernel due to the random matrix repulsion and in part by the potential. 
The result~\eqref{eq:Rlimit2} with $\lambda=1$ gives us the exact relationship between these two parts. 
The question of exactly which potentials would indeed give rise to the presence of the parking meters or the markings on the floor is left to the reader's imagination.

\section{Example}\label{sec:example}

As an example, we consider the case when the potential of~\eqref{eq:density} is 
\begin{equation}
	V(e^{i\theta})= -t \cos(\theta), \qquad t>0.
\end{equation}
in more detail. 
It will be clear from the calculations below that the same analysis 
applies to more general potentials $V(e^{i\theta})= tW(e^{i\theta})$ where $W$ has the unique minimum in $[-\pi, \pi]$ and is locally quadradic near the minimizer.  
However, we state the results only for the above potential. 

This potential has the minimum at $\theta=0$ as a function $\theta\in [-\pi, \pi]$.  
Near the minimizer, the potential is locally quadratic.
Hence the potential $V_\Lambda(e^{i\theta})= -t\cos(\Lambda\theta)$ has $\Lambda$ minima at $\theta=0, \frac{2\pi}{\Lambda}, \frac{4\pi}{\Lambda}, \cdots, \frac{2(\Lambda-1)\pi}{\Lambda}$. 
In other words, the potential $V_\Lambda$ has $\Lambda$ wells.
When $t>0$ becomes larger, the well at each minimizer becomes deeper. 
We will evaluate the effect when $t\to \infty$ at the same time when $N\to \infty$. 
In order to make the presentation simple, we only consider the case when $\frac{\Lambda}{N}=\lambda$ is a fixed positive integer, i.e. the number of wells is an integer multiple of the number of eigenvalues. Then the reproducing kernel is simply given by~\eqref{eq:corsim}. 
Note that 
\begin{equation}
    w_0= \int_{-\pi}^{\pi} e^{t\cos\theta} d\theta
    \approx \sqrt{\frac{2\pi}{t}} e^{t}, \qquad \text{as $t\to\infty$.}
\end{equation}

We first consider the correlation functions. 
If we set $\xi_j=\frac{2\pi \ell_j}{\lambda}+ \frac{u_j}{\lambda \sqrt{t}}$ in~\eqref{eq:Rlimit2} where $\ell_j$ are integers and $u_j=O(1)$, then we have 
\begin{equation}
    \frac{w(e^{i\lambda \xi_1})\cdots w(e^{i \lambda \xi_m})}{w_0^{m}}
    \approx \bigg( \frac{t}{2\pi} \bigg)^{m/2} e^{-\frac12(u_1^2+\cdots + u_m^2)}
\end{equation}
and 
\begin{equation}\label{eq:Sellde}
    \det\bigg[ \frac{\sin(\frac12(\xi_a-\xi_b))}{\frac12(\xi_a-\xi_b)} \bigg]_{a,b=1}^{m}
    \to \det\bigg[ \frac{\sin(\frac{\pi}{\lambda}(\ell_a-\ell_b))}{\frac{\pi}{\lambda}(\ell_a-\ell_b)} \bigg]_{a,b=1}^{m}
    =: S_m^{(\lambda)}(\ell_1, \cdots, \ell_m)
\end{equation}
as $t\to \infty$. 
Using Corollary~\ref{cor:simn1}, it is easy to check that this limit holds even if we take $N\to \infty$ and $t\to\infty$ simultaneously. 
Note that $\frac{2\pi \ell_j}{\lambda N}$ denotes the location of one of $\lambda N=\Lambda$ minimizers of the potential $V_{\lambda N}(e^{i\theta})= -t\cos(\lambda N\theta)$.

\begin{cor}
For $u_1, \dots, u_\ell$ in a compact subset of $\mathbb{R}$ and for $\ell_j\in \Z$, 
\begin{equation}
      \lim_{N,t\to\infty} \frac1{(N\sqrt{t})^m} R_{m}^{N, \lambda N} \bigg( e^{i(\frac{2\pi \ell_1}{\lambda N}+\frac{u_1}{\lambda N\sqrt{t}})},
      \cdots,
      e^{i(\frac{2\pi \ell_m}{\lambda N}+\frac{u_m}{\lambda N\sqrt{t}})} \bigg)
      = \frac1{(2\pi)^{m/2}} e^{-\frac12(u_1^2+\cdots+u_m^2)} S_m^{(\lambda)}(\ell_1, \cdots, \ell_m)
\end{equation}
where $S_m^{(\lambda)}(\ell_1, \cdots, \ell_m)$ is defined in~\eqref{eq:Sellde}.  
\end{cor}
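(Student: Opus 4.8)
The plan is to feed the explicit kernel of Corollary~\ref{cor:simn1} into the determinantal formula for the correlation function, after which both limits can be read off from elementary functions. Since the $m$-point function of~\eqref{eq:density} equals $\det\bigl(\calK_{N,\lambda N}(e^{i\theta_a},e^{i\theta_b})\bigr)_{a,b=1}^m$ (the conjugation relating the conjugated kernel $\calK_{N,\lambda N}$ to the reproducing kernel $\K_{N,\lambda N}$ is by a diagonal matrix and so leaves the determinant unchanged), I would set $\theta_j=\xi_j/N$ with $\xi_j=\frac{2\pi\ell_j}{\lambda}+\frac{u_j}{\lambda\sqrt t}$, so that the $e^{i\theta_j}$ are exactly the points in the statement, and apply Corollary~\ref{cor:simn1} with $L=N$ to write each entry in closed form. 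Pulling the weight factor $\sqrt{w(e^{i\lambda\xi_a})/w_0}$ out of the $a$-th row and column then gives
\[
R_m^{N,\lambda N}=\frac{w(e^{i\lambda\xi_1})\cdots w(e^{i\lambda\xi_m})}{w_0^m}\,
\det\left[\frac{\sin(\tfrac12(\xi_a-\xi_b))}{\sin(\tfrac1{2N}(\xi_a-\xi_b))}\right]_{a,b=1}^m .
\]

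The two factors decouple the two limits. For the weights, $\lambda\xi_j=2\pi\ell_j+u_j/\sqrt t$ gives $w(e^{i\lambda\xi_j})=e^{t\cos(u_j/\sqrt t)}=e^{t}e^{-u_j^2/2}(1+O(1/t))$ by a two-term Taylor expansion of the cosine at its minimum, while Laplace's method gives $w_0\sim\sqrt{2\pi/t}\,e^t$; hence $t^{-m/2}$ times the weight factor converges to $(2\pi)^{-m/2}e^{-\frac12(u_1^2+\cdots+u_m^2)}$, uniformly for the $u_j$ in a compact set since the error $O(u_j^4/t)$ is controlled there.

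For the sine determinant I would divide each entry by $N$ and use the factorization
\[
\frac1N\,\frac{\sin(\tfrac12(\xi_a-\xi_b))}{\sin(\tfrac1{2N}(\xi_a-\xi_b))}
=\frac{\sin(\tfrac12(\xi_a-\xi_b))}{\tfrac12(\xi_a-\xi_b)}\cdot
\frac{\tfrac1{2N}(\xi_a-\xi_b)}{\sin(\tfrac1{2N}(\xi_a-\xi_b))} ,
\]
which is exactly what lets the two limits be taken simultaneously: since the $\ell_j$ are fixed and the $u_j$ bounded, $\xi_a-\xi_b$ stays bounded, so the second factor tends to $1$ as $N\to\infty$ uniformly, while the first factor, a value of the continuous sinc function, tends to $\sin(\frac{\pi}{\lambda}(\ell_a-\ell_b))/(\frac{\pi}{\lambda}(\ell_a-\ell_b))$ as $t\to\infty$ because $\xi_a-\xi_b\to\frac{2\pi}{\lambda}(\ell_a-\ell_b)$. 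As the determinant of a fixed-size matrix is continuous in its entries, $N^{-m}$ times the sine determinant converges to $S_m^{(\lambda)}(\ell_1,\dots,\ell_m)$. Multiplying by the weight limit and recalling $(N\sqrt t)^{-m}=N^{-m}t^{-m/2}$ yields the asserted limit.

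The only point requiring real care is that $N$ and $t$ are sent to infinity simultaneously rather than in sequence; what makes this harmless is precisely that Corollary~\ref{cor:simn1} is an exact identity, with $N$ entering only through $\sin(\tfrac1{2N}(\xi_a-\xi_b))$ and $t$ only through the bounded differences $\xi_a-\xi_b$ and the weights, so the two error terms, $O(1/N^2)$ and $O(1/t)$, are independent of each other. I would note in passing that the degenerate case $\ell_a=\ell_b$ with $a\neq b$ (two points colliding at scale $1/\sqrt t$, so that $\xi_a-\xi_b=\frac{u_a-u_b}{\lambda\sqrt t}\to0$) needs no separate treatment: the factorization above routes it through the value of sinc at $0$, which is $1$, and the limiting matrix then has two equal rows, giving $S_m^{(\lambda)}=0$, consistent with the collision of the two arguments.
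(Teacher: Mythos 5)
Your proposal is correct and follows essentially the same route as the paper: plug the exact kernel of Corollary~\ref{cor:simn1} into the determinantal formula, split off the weight factors (handled by the Taylor expansion of $t\cos(u/\sqrt t)$ and the Laplace asymptotics $w_0\sim\sqrt{2\pi/t}\,e^t$) from the sine ratio (handled by $N^{-1}\sin(\tfrac12 x)/\sin(\tfrac1{2N}x)\to\sin(\tfrac12 x)/(\tfrac12 x)$). The paper presents the two limits sequentially and merely remarks that Corollary~\ref{cor:simn1} makes the simultaneous limit "easy to check''; your factorization of the sine ratio is exactly the detail that justifies that remark.
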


Hence as $t\to\infty$,  the $m$-point correlation function converges to the density of $m$ independent Gaussian variables times the multiplicative factor $S_m^{(\lambda)}(\ell_1, \cdots, \ell_m)$. 
Note that $S_m^{(\lambda)}(\ell_1, \cdots, \ell_m) \in [0,1]$. 
It equals $1$ if $\ell_i-\ell_j\in \lambda \Z\setminus\{0\}$ for all $i, j$. 
Thus, the multiplicative factor $S_m^{(\lambda)}(\ell_1, \cdots, \ell_m)$ is the largest when the distances between $\ell_j$ are integer multiples of $\lambda$. 
This coincides with the intuition since there are $\lambda$ times more minimizers than the eigenvalues. 

\bigskip

We next evaluate the probability that all eigenvalues are near the $\lambda N$ minimizers of $V_{\lambda N}(e^{i\theta})= -t\cos(\lambda N\theta)$. 
For $\epsilon\in (0,\pi)$, set 
\begin{equation}
  \Gamma_{\epsilon} 
  = \bigcup_{j=1}^{\lambda N} \bigg\{ \theta :  \frac{2\pi (j-1)}{\lambda N} +\frac{ \epsilon}{\lambda N} \le \theta\le \frac{2\pi j}{\lambda N} -\frac{\epsilon}{\lambda N} \bigg\}.
\end{equation}
This set is the complement of the union of the $\frac{2\epsilon}{\lambda N}$-neighborhoods of the minimizers. 
We expect that the probability that there is no eigenvalue in $\Gamma_{\epsilon}$ is close to $1$. 
This can be seen as follows. 
The gap probability is $\det(1-\calK_{N, \lambda N}1_{\Gamma_{\epsilon}})$.
Using the basic estimate $\det(1-A)-\det(1-B)\le \|A-B\|_1 e^{\|A\|_1+\|B\|_1+1}$ for trace class operators (where $\|\cdot \|_1$ stands for the trace norm), we have 
\begin{equation}
  	1- \Prob(\text{no eigenvalue in $\Gamma_\epsilon$})= 1-\det(1-\calK_{N, \lambda N}1_{\Gamma_{\epsilon}}) \le \tr(\calK_{N, \lambda N}1_{\Gamma_{\epsilon}}) e^{\tr(\calK_{N, \lambda N}1_{\Gamma_{\epsilon}})+1}.
\end{equation}
Here we used the fact that $\calK_{N, \lambda N}1_{\Gamma_{\epsilon}}$ is a positive trace-class operator, 
and therefore  $\|\calK_{N, \lambda N}1_{\Gamma_{\epsilon}}\|_1= \tr (\calK_{N, \lambda N}1_{\Gamma_{\epsilon}} )$.
Now
\begin{equation}
   \tr(\calK_{N, \lambda N}1_{\Gamma_{\epsilon}}1_{\Gamma_{\epsilon}})
   = \int_{\Gamma_{\epsilon}} \calK_{N, \lambda N}(e^{i \theta},e^{i \theta})d\theta
   = \frac{N}{w_0} \int_{\Gamma_{\epsilon}} w(e^{i \lambda N \theta}) d\theta
\end{equation}
using Theorem~\ref{thm1} with $\xi=\eta=N \theta$. 
Hence 
\begin{equation}
   \tr(\calK_{N, \lambda N}1_{\Gamma_{\epsilon}}1_{\Gamma_{\epsilon}})
   = \frac{N}{w_0} \int_{ \epsilon}^{2\pi- \epsilon}   e^{t\cos\phi} d\phi
   = O\bigg( \frac{Ne^{t\cos \epsilon}}{t w_0} \bigg)
   = O\bigg( \frac{Ne^{-\frac14\epsilon^2 t}}{\sqrt{t}} \bigg)
\end{equation}
and we find that $\Prob(\text{no eigenvalues in $\Gamma_{\epsilon}$})\to 1$ as $N, t\to\infty$ and, for example, $t\ge (\log N)^2$. 

\bigskip

We finally evaluate the probability that there is exactly one eigenvalue near one of the minimizers of $V_{\lambda N}(\theta)$. 
Since there are $\lambda N$ minimizers and $N$ eigenvalues, it is expected that this probability is close to $\frac1{\lambda}$.
By symmetry, we may consider the neighborhood of $\theta=0$. 
For $\epsilon\in (0,1/2)$, set 
\begin{equation}\label{eq:Jep}
	J_\epsilon :=  \big\{ \theta: -\frac{2\pi\epsilon}{\lambda N} \le \theta\le \frac{2\pi\epsilon}{\lambda N} \big\}.
\end{equation}
From the theory of determinantal point processes, the probability that there is precisely $k$ eigenvalues in an interval $I$ is given by $\frac{d^k}{dz^k}\big|_{z=-1} \det(1+z\calK_{N, \lambda N}1_{I})$. 
With $I=J_\epsilon$, from the definition of Fredholm determinant, 
\begin{equation}\label{eq:detzcalK}
	\det(1+z\calK_{N, \lambda N}1_{J_\epsilon}) = 1+zM_1+ \frac{z^2}{2!} M_2+ \frac{z^3}{3!} M_3+\cdots
\end{equation}
where 
\begin{equation}\label{eq:Mm}
\begin{split}
	M_m
	&= \int_{J_\epsilon}\cdots\int_{J_\epsilon} \det(\calK_{N, \lambda N}(e^{i\theta_j}, e^{i\theta_k}))_{j,k=1}^m d\theta_1\cdots d\theta_m \\
	&= \bigg( \frac{2\pi}{\lambda N}\bigg)^m 
	\int_{-\epsilon}^\epsilon \cdots\int_{-\epsilon}^\epsilon 
	\det\bigg[ \frac{\sin(\frac{\pi}{\lambda}(\xi_j-\xi_k))}{\sin(\frac{\pi}{\lambda N}(\xi_j-\xi_k))} \bigg]_{j,k=1}^m  \prod_{j=1}^m \frac{e^{t\cos(2\pi\xi_j)}}{w_0} d\xi_j
\end{split}
\end{equation}
using Corollary~\ref{cor:simn1} for the last equality. 

For $m=1$, 
\begin{equation}\label{eq:M1}
\begin{split}
	M_1
	&= \frac{2\pi}{\lambda} \int_{-\epsilon}^\epsilon   \frac{e^{t\cos(2\pi\xi)}}{w_0} d\xi
	= \frac{1}{\lambda} +O(e^{-ct})
\end{split}
\end{equation}
for some constant $c>0$.

Let us consider $m\ge 2$. As $N\to \infty$, the determinant in~\eqref{eq:Mm} times $N^{-m}$ converges to 
$S_m^{(\lambda)}(\xi_1, \cdots, \xi_m)$ defined in~\eqref{eq:Sellde}. 
Hence as $N\to \infty$, 
\begin{equation}\label{eq:Mm200}
\begin{split}
	M_m
	&\approx  \bigg( \frac{2\pi}{\lambda}\bigg)^m 
	\int_{-\epsilon}^\epsilon \cdots\int_{-\epsilon}^\epsilon 
	S_m^{(\lambda)}(\xi_1, \cdots, \xi_m) \prod_{j=1}^m \frac{e^{t\cos(2\pi\xi_j)}}{w_0} d\xi_j.
\end{split}
\end{equation}
We use the Laplace method to evaluate the integral asymptotically as $t\to \infty$. The main contribution to the integral is attained when $\xi_1=\cdots = \xi_m=0$. 
But the function $S_m^{(\lambda)}(\xi_1, \cdots, \xi_m)$ is $0$ when $\xi_1=\cdots= \xi_m=0$. 
Hence we need to expand this function using the Taylor series at the origin. In doing so, since $e^{t\cos(2\pi \xi)}$ is an even function, the leading term comes from a constant times 
\begin{equation}\label{eq:otherM1}
\begin{split}
	 \int_{-\epsilon}^\epsilon \cdots\int_{-\epsilon}^\epsilon 
	(\xi_1^2+\cdots + \xi_m^2)  \prod_{j=1}^m \frac{e^{t\cos(2\pi \xi_j)}}{w_0} d\xi_j
	\approx m  \frac{\int_{-\epsilon}^\epsilon 
	\xi^2 e^{- 2\pi^2 t \xi} d\xi }{\int_{-\epsilon}^\epsilon e^{- 2\pi^2 t \xi} d\xi} = O\big(\frac{m}{t} \big), 
	\quad m\ge 2.
\end{split}
\end{equation}
It is not difficult to see that this is uniform in all large enough $N$ and we obtain $M_m=O(t^{-1})$ for $m\ge 2$.

Since the matrix in~\eqref{eq:Mm} is a positive matrix,  the determinant is bounded by the product of the diagonal terms of the matrix from the Hadamard's inequality. Thus the determinant is bounded by $1$ and hence 
\begin{equation}
\begin{split}
	M_m
	&\le  \bigg( \frac{2\pi}{\lambda}\bigg)^m   \int_{-\epsilon}^\epsilon \cdots\int_{-\epsilon}^\epsilon   \prod_{j=1}^m \frac{e^{t\cos(2\pi \xi_j)}}{w_0} d\xi_j \le \frac{1}{\lambda^m}
\end{split}
\end{equation}
for all $m$. 
Therefore, from the dominated convergence theorem, 
the limits of~\eqref{eq:detzcalK} and also any derivatives as $N, t\to\infty$ can be evaluated by taking a term-by-term limit. 
Hence
\begin{equation}
\begin{split}
	\Prob(\text{ precisely 1 eigenvalue in $J_\epsilon$})
	= M_1- M_2+\frac1{2!} M_3 -\frac1{3!}M_4 +\cdots  \to \frac1{\lambda}
\end{split}
\end{equation}
as $N, t\to\infty$. 
A more precise estimate, which we do not discuss here, implies that the above probability is indeed $\frac1{\lambda}+O(t^{-1})$.  


Since there are $\lambda N$ minimizers of the potential $V_{\lambda N}(\theta)$, 
the above result implies that the probability that every eigenvalue is in the $\frac{4\pi \epsilon}{\lambda N}$-neighborhood of a minimizer and each neighborhood  is occupied by at most one eigenvalue 
is $1+O(\frac{N}{t})$. This tends to $1$ if $\frac{t}{N}\to\infty$ and $t, N\to \infty$. 

\subsubsection*{Acknowledgments}
I would like to thank Peter Forrester for bringing my attention to his paper \cite{Forrester} after the completion of the first version of this paper. 
This work was supported in part by NSF grants DMS1068646.


\end{document}